\numberwithin{figure}{section}
\theoremstyle{plain}
\newtheorem{theorem}{Theorem}[section]
\newtheorem{cor}{Corollary}[theorem]
\theoremstyle{definition}
\theoremstyle{remark}
\title[$m$-quasi Einstein manifolds with convex potential]{$m$-quasi Einstein manifolds with convex potential}
\author[A. A. Shaikh, P. Mandal, C. K. Mondal, A. Ali]{Absos Ali Shaikh$^{1*}$, Prosenjit Mandal$^2$, Chandan Kumar Mondal$^{3\dag}$, Akram Ali $^4$}
\address{\noindent\newline $^{1,2,3}$Department of Mathematics,\newline The University of Burdwan,Golapbag,\newline Purba Bardhaman-713101,\newline West Bengal, India}
\address{\noindent\newline $^\dag$School of Sciences,\newline Netaji Subhas Open University,\newline Durgapur Regional Center, Durgapur-713214\newline Paschim Bardhaman,\newline West Bengal, India}
\address{\noindent\newline $^4$Department of Mathematics,\newline  College of Science,\newline King Khalid University,\newline  9004 Abha, Saudi Arabia}
\email{$^1$aask2003@yahoo.co.in, aashaikh@math.buruniv.ac.in}
\email{$^2$prosenjitmandal235@gmail.com}
\email{$^3$chan.alge@gmail.com}
\email{$^4$ akramali133@gmail.com}
\begin{document}
\begin{abstract}
The main objective of this paper is to investigate the $m$-quasi Einstein manifold when the potential function becomes convex. In this article, it is proved that an $m$-quasi Einstein manifold satisfying some integral conditions with vanishing Ricci curvature along the direction of potential vector field has constant scalar curvature and hence the manifold turns out to be an Einstein manifold. It is also shown that in an $m$-quasi Einstein manifold the potential function agrees with  Hodge-de Rham potential up to a constant. Finally, it is proved that if a complete non-compact and non-expanding $m$-quasi Einstein manifold has bounded scalar curvature and the potential vector field has global finite norm, then the scalar curvature vanishes.
\end{abstract}
\noindent\footnotetext{$\mathbf{2020}$\hspace{5pt}Mathematics\; Subject\; Classification: 53C20; 53C21; 53C44.\\ 
{Key words and phrases: Quasi Einstein Manifold; scalar curvature; convex function; Einstein Manifold.} }
\maketitle
\section{Introduction and preliminaries}
A Riemannian manifold $(M,g)$ of dimension $n(> 2)$ is called an $m$-quasi Einstein manifold (see e.g. \cite{BR12,CSW11,SHP21}), if its Ricci tensor $Ric$ satisfies the following relation:
\begin{equation}\label{q1}
Ric+\frac{1}{2}\pounds_Xg-\frac{1}{m}X^\flat\otimes X^\flat=\lambda g,
\end{equation}
where $X$ is a smooth vector field on $M$, $X^\flat$ is the dual 1-form of $X$ with respect to the metric $g$ and $m, \lambda$ are scalar such that $0<m\leq\infty$. If we omit the lie derivative term in (\ref{q1}), then we get the notion of quasi Einstein manifold (see e.g. \cite{DDVV99,SYH09, SKH11} and also references therein) which appeared in the literature while considering the investigation of exact solution of Einstein field equation and also during the consideration of quasiumbilical hypersurfaces, studied by Cartan. In \cite{DVY94} it is shown that $3$-dimensional Cartan hypersurfaces are quasi-Einstein manifolds. Throughout the paper we will consider the $m$-quasi Einstein manifolds (\cite{CSW11}), which is a generalization of Ricci soliton in case of $m$-Bakry-Emery tensor. An $m$-quasi Einstein manifold is said to be expanding (resp., steady or shrinking), if $\lambda<0$ (resp., $ \lambda =0$ or $\lambda>0$). If $X$ is the gradient of a smooth function $f$, then ($\ref{q1}$) reduces to the following form:
\begin{equation}\label{e0}
Ric+\nabla^2f-\frac{1}{m}df\otimes df=\lambda g.
\end{equation}
The function $f$ is called potential function of the $m$-quasi Einstein manifold. The tensor of the left hand side of (\ref{e0}) is called the $m$-Bakry-Emery tensor. Further, if $m=\infty$, then (\ref{e0}) reduces to the equation of gradient Ricci soliton (see e.g. \cite{CK04, HA82,CA20,AM21,SMM20}) with the potential function $f$. If $m$ and $\lambda$ are smooth functions on $M$, then (\ref{e0}) is called the generalized $m$-quasi Einstein manifold \cite{CAT12}. Moreover, taking the trace of (\ref{e0}), we deduce
\begin{equation}\label{e1}
\Delta f=\lambda n-R+\frac{1}{m} |\nabla f|^2,
\end{equation}
where $R$ denotes the scalar curvature of the manifold. Case et al. \cite{CSW11} proved that a compact $m$-quasi Einstein metric with constant scalar curvature is trivial. They also showed that all $2$-dimensional $m$-quasi Einstein metrics on compact manifolds are trivial. In 2012, Barros and Ribeiro \cite{BR12} proved that in a complete non-compact $m$-quasi Einstein manifold if $\lambda n\geq R$ and $|\nabla f|\in L^1(M)$, then the manifold becomes Einstein. In 2015, Hu et al. \cite{HLX15} classified $m$-quasi Einstein manifolds  with parallel Ricci tensor, and also in \cite{HLZ17}, they classified $m$-quasi Einstein manifolds with all eigenvalues of the Ricci tensor as constant. Kim and Shin \cite{KS19} also classified 3-dimensional $m$-quasi Einstein manifolds. Barros and Gomes \cite{BG16} studied compact $m$-quasi Einstein manifolds and also showed that if such a manifold is Einstein, then its potential vector field vanishes. We note that the investigation of various structures of Riemannian manifolds through the convexity revel several geometrical and topological properties of such manifolds. Hence by motivating the above studies as well as the study of \cite{ABR2011}, in the present paper, we prove the following:
\begin{theorem}\label{th1}
Suppose $(M,g)$ is an $m$-quasi Einstein manifold which is complete and non-compact and endowed with a positive convex potential function $f$ and $Ric\geq -(n-1)K$ for some positive constant $K$. If the Ricci curvature of the manifold vanishes along $\nabla f$ and $f$ satisfies
\begin{equation}\label{e4}
\quad \int_{M-B(p,r)}\frac{f}{d(x,p)^2}<\infty,
\end{equation}
where $B(p,r)$ is an open ball with center $p$ and radius $r$, and $d(x,p)$ is the shortest distance between $x$ and $p$ in $M$, then the following holds:\\
(i) The manifold is Einstein,\\
(ii) The scalar curvature of the manifold is a non-positive constant,\\
(iii) The Ricci curvature of the manifold is non-positive everywhere in $M$.
\end{theorem}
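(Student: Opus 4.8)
The plan is to convert the convexity of $f$ into subharmonicity, to upgrade this to harmonicity using the growth hypothesis (\ref{e4}), and then to read the geometry off the defining equation (\ref{e0}). Since $f$ is convex, $\nabla^2 f\ge 0$, and therefore $\Delta f=\operatorname{tr}_g\nabla^2 f\ge 0$; thus $f$ is a non-negative subharmonic function on the complete non-compact manifold $M$.

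The decisive step is to prove $\Delta f\equiv 0$. I would run a Karp--Yau type argument: test the inequality $\Delta f\ge 0$ against a radial cut-off $\phi_r$ that is $1$ on $B(p,r)$, is supported in $B(p,2r)$, and satisfies $|\nabla\phi_r|\le C/r$. Using $\phi_r\ge 0$ and $\Delta f\ge 0$ gives $\int_{B(p,r)}\Delta f\le\int_M\phi_r\,\Delta f$, and since $\phi_r$ has compact support the divergence theorem yields $\int_M\phi_r\,\Delta f=\int_M f\,\Delta\phi_r\le\int_{B(p,2r)\setminus B(p,r)} f\,|\Delta\phi_r|$. The lower Ricci bound $Ric\ge-(n-1)K$ enters precisely here, through the Laplacian comparison $\Delta d(x,p)\le (n-1)\sqrt{K}\coth(\sqrt{K}\,d(x,p))$, which keeps $|\Delta\phi_r|$ controlled on a general complete manifold. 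The weighted hypothesis (\ref{e4}) then forces the annular quantity to vanish as $r\to\infty$, so $\int_M\Delta f=0$; since $\Delta f\ge 0$ this gives $\Delta f\equiv 0$, i.e. $f$ is harmonic.

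Harmonicity is rigid in the presence of convexity: $0=\Delta f=\operatorname{tr}_g\nabla^2 f$ together with $\nabla^2 f\ge 0$ forces $\nabla^2 f\equiv 0$, so $\nabla f$ is parallel and $|\nabla f|$ is constant. Here the positivity of $f$ is essential: if $\nabla f\ne 0$, its integral curves are complete geodesics along which $f$ is affine with positive slope $|\nabla f|^2$, whence $f\to-\infty$ in one direction, contradicting $f>0$. Thus $\nabla f\equiv 0$ and $f$ is constant. Substituting $\nabla^2 f=0$ into (\ref{e0}) gives $Ric=\lambda g+\tfrac1m\,df\otimes df$, and the hypothesis that $Ric$ vanishes along $\nabla f$ reads $Ric(\nabla f,\nabla f)=\lambda|\nabla f|^2+\tfrac1m|\nabla f|^4=0$; in any case the rank-one term $df\otimes df$ disappears once $\nabla f\equiv 0$, leaving $Ric=\lambda g$, which is (i). Taking the trace gives $R=n\lambda$, a constant.

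For the sign I would invoke the Bonnet--Myers theorem. If $\lambda>0$, then $Ric=\lambda g$ is bounded below by a positive multiple of $g$, forcing $M$ to be compact and contradicting the non-compactness hypothesis; hence $\lambda\le 0$. Consequently $R=n\lambda\le 0$ is a non-positive constant, giving (ii), and $Ric=\lambda g\le 0$ everywhere, giving (iii). I expect the harmonicity step to be the main obstacle: making the cut-off estimate fully rigorous with only a lower Ricci bound requires the comparison geometry above to guarantee that the annular term genuinely decays, and it is exactly the weighted form of (\ref{e4}), together with the constant $K$, that makes this possible.
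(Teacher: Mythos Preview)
Your proof is correct and, after the harmonicity step, takes a genuinely different and more elementary route than the paper. Both arguments first use a Cheeger--Colding cut-off (whose construction needs the lower Ricci bound) together with the growth hypothesis (\ref{e4}) to pass from $\Delta f\ge 0$ to $\Delta f\equiv 0$. From there the paper sets $u=\log f$ and runs a Cheng--Yau type gradient estimate via the Bochner formula and a second radial cut-off, invoking the Laplacian comparison once more and then the hypothesis $Ric(\nabla f,\nabla f)=0$ to conclude $|\nabla u|=0$. You instead observe directly that $\operatorname{tr}_g\nabla^2 f=0$ together with $\nabla^2 f\ge 0$ forces $\nabla^2 f\equiv 0$ pointwise, so $\nabla f$ is parallel, and then use positivity of $f$ along a complete integral geodesic to rule out $\nabla f\neq 0$. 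This bypasses the gradient estimate entirely and, notably, never uses the assumption $Ric(\nabla f,\nabla f)=0$; your argument in fact shows that this hypothesis is redundant. For the sign of $\lambda$ the paper quotes a result of Hu--Li--Xu on $m$-quasi Einstein metrics with constant scalar curvature, whereas your appeal to Bonnet--Myers is again more self-contained. The trade-off is that the paper's longer argument illustrates how the radial Ricci condition enters naturally through the Bochner term, while your approach isolates the minimal ingredients actually needed for the conclusion.
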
 
\begin{cor}
Let $(M,g)$ be a complete non-compact $m$-quasi Einstein manifold with positive convex potential function $f$. If the manifold is radially Ricci flat and satisfies the condition (\ref{e4}), then the manifold is Einstein with non-positive Ricci curvature.
\end{cor}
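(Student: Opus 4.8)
The plan is to show that the potential function $f$ must in fact be constant, so that (\ref{e0}) collapses to the Einstein equation $Ric=\lambda g$; parts (ii) and (iii) then follow by determining the sign of $\lambda$. The starting point is that convexity of $f$ means $\nabla^2 f\ge 0$, hence $\Delta f=\operatorname{tr}\nabla^2 f\ge 0$, so $f$ is a positive subharmonic function.

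The main analytic step, and the place I expect the real work to lie, is to upgrade subharmonicity to harmonicity using the growth hypothesis (\ref{e4}). First I would perform a dyadic decomposition of $M\setminus B(p,r)$ into annuli $B(p,2^{k+1}r)\setminus B(p,2^{k}r)$, on which $d(x,p)^2$ is comparable to $(2^{k}r)^2$; finiteness of $\int_{M-B(p,r)} f/d(x,p)^2$ then forces $\frac{1}{(2^kr)^2}\int_{B(p,2^{k+1}r)\setminus B(p,2^kr)} f\to 0$, i.e. $\liminf_{R\to\infty} R^{-2}\int_{B(p,2R)\setminus B(p,R)} f\,dV=0$. With the lower bound $Ric\ge-(n-1)K$ available to support the volume comparison that such results invoke, this is precisely the hypothesis of a Karp-type Liouville theorem for non-negative subharmonic functions (in the spirit of \cite{ABR2011}), whose conclusion is $\Delta f=0$. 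Since $\nabla^2 f\ge 0$ is positive semidefinite and now has vanishing trace, it must vanish identically: $\nabla^2 f\equiv 0$.

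With $\nabla^2 f\equiv 0$ the gradient $\nabla f$ is parallel, so $f$ is affine along every geodesic, and here positivity together with completeness does the decisive work. If $\nabla f(p)\neq 0$ for some $p$, the unit-speed geodesic issuing from $p$ in the direction $-\nabla f(p)/|\nabla f(p)|$ gives $f(\gamma(t))=f(p)-|\nabla f(p)|\,t$, which tends to $-\infty$ and contradicts $f>0$. Hence $\nabla f\equiv 0$, so $f$ is constant and (\ref{e0}) reduces to $Ric=\lambda g$, proving (i). This is consistent with the hypotheses: a parallel gradient forces $Ric(\nabla f,\cdot)=0$ automatically, and substituting $\nabla^2 f=0$ into (\ref{e0}) and contracting with $\nabla f$ pins down $\lambda=-\tfrac1m|\nabla f|^2$ in the would-be non-constant case, which the positivity argument excludes.

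Finally, for (ii) and (iii) I would invoke Bonnet--Myers: if $\lambda>0$ then $Ric=\lambda g$ supplies a uniform positive Ricci lower bound, forcing $M$ to be compact and contradicting non-compactness. Therefore $\lambda\le 0$, whence the scalar curvature $R=n\lambda$ is a non-positive constant, giving (ii), and $Ric=\lambda g\le 0$ everywhere, giving (iii). The single point demanding care is the precise form of the Liouville/Karp theorem in the second step, namely that the integral condition (\ref{e4}), under the stated Ricci lower bound, is strong enough to force harmonicity rather than mere subharmonicity; this is the crux, with everything downstream being either algebraic or a standard comparison argument.
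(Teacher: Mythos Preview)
Your argument is correct and takes a genuinely different, more elementary route than the paper. Both you and the paper first deduce $\Delta f=0$ from convexity together with the integral condition~(\ref{e4}) via a cutoff argument. From that point the paper proceeds through a Yau--type gradient estimate: it sets $u=\log f$, applies the Bochner formula to $\eta|\nabla u|^2$ with a cutoff $\eta$, uses the Laplacian comparison theorem for $\Delta d$, and after a maximum-principle computation arrives at $|\nabla u|^2\le -\tfrac{2n}{f^2}Ric(\nabla f,\nabla f)$; only then does the hypothesis that the Ricci curvature vanishes along $\nabla f$ force $f$ to be constant. Your observation that $\nabla^2 f\ge 0$ and $\operatorname{tr}\nabla^2 f=\Delta f=0$ already give $\nabla^2 f\equiv 0$, whence $\nabla f$ is parallel and positivity on a complete manifold forces $f$ to be constant, bypasses the entire gradient-estimate machinery and---notably---never uses the radial Ricci-flatness hypothesis at all after the harmonicity step. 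For the sign of $\lambda$ the paper quotes the Hu--Li--Xu result (Theorem~\ref{th2}), while your Bonnet--Myers argument is self-contained and equally valid. Your approach thus actually yields a stronger statement than the one asserted.

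One point to tighten: you invoke ``the lower bound $Ric\ge-(n-1)K$'' for the harmonicity step, but that is a hypothesis of Theorem~\ref{th1}, not of the Corollary, which instead assumes the manifold is radially Ricci flat. You should indicate why the latter suffices for the cutoff construction---essentially because the Laplacian comparison for $d(\cdot,p)$ only requires control of $Ric(\partial_r,\partial_r)$, which radial Ricci flatness supplies.
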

\begin{theorem}\label{th3}
If $(M,g)$ is a compact oriented $m$-quasi Einstein manifold such that  $f$ is a potential function on $M$, then $f$ agrees, upto a constant, with the Hodge-de Rham potential.
\end{theorem}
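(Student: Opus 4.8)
The plan is to reduce the statement to a clean application of the Hodge--de Rham decomposition together with the compactness of $M$. First I would recall the precise meaning of the Hodge--de Rham potential: on a compact oriented Riemannian manifold, the Hodge--de Rham decomposition applied to the smooth vector field $X$ (equivalently, to its dual $1$-form $X^\flat$) yields
\begin{equation*}
X=\nabla h+Z,\qquad \operatorname{div}Z=0,
\end{equation*}
where the function $h$, determined uniquely up to an additive constant, is by definition the Hodge--de Rham potential of $X$. Orientability is exactly the hypothesis that makes Hodge theory and Stokes' theorem available here.

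Next I would invoke the hypothesis that $f$ is a potential function, which by the definition following \eqref{e0} means $X=\nabla f$. Substituting this into the decomposition gives $\nabla f=\nabla h+Z$, so that
\begin{equation*}
Z=\nabla(f-h)
\end{equation*}
is simultaneously a gradient field and divergence-free. This is the crux of the argument: I must show that such a field is forced to vanish.

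To do so, I would integrate $|Z|^2$ over $M$ and integrate by parts. Since $Z=\nabla(f-h)$ with $\operatorname{div}Z=0$, the divergence theorem on the compact oriented manifold gives
\begin{equation*}
\int_M|Z|^2\,dV=\int_M\langle\nabla(f-h),Z\rangle\,dV=-\int_M(f-h)\operatorname{div}Z\,dV=0,
\end{equation*}
whence $Z\equiv 0$ and $\nabla f=\nabla h$; since $M$ is connected, $f-h$ is constant, which is the assertion. As an alternative route that ties in the $m$-quasi Einstein structure more explicitly, I could instead note that the Hodge--de Rham potential is characterized by $\Delta h=\operatorname{div}X=\Delta f$, where the last equality uses $X=\nabla f$ and, if one wishes, the trace identity \eqref{e1}; then $\Delta(f-h)=0$ forces $f-h$ to be constant because a harmonic function on a compact manifold is constant.

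I do not expect a genuine obstacle here: the content of the theorem is the structural fact that a divergence-free gradient on a compact oriented manifold is trivial. The only points requiring care are invoking the Hodge--de Rham decomposition under the correct hypotheses (compact and oriented) and carrying out the integration by parts that eliminates the co-closed part; the $m$-quasi Einstein equation itself enters only through the identification $X=\nabla f$ (and, optionally, through \eqref{e1}).
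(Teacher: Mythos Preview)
Your proposal is correct and follows essentially the same approach as the paper: both apply the Hodge--de Rham decomposition $X=\nabla\xi+W$ with $\operatorname{div}W=0$ and then deduce $\Delta(f-\xi)=0$, hence $f-\xi$ is constant by compactness. The only cosmetic difference is that the paper arrives at $\Delta(f-\xi)=0$ by subtracting the traced equations \eqref{qe1} and \eqref{e1} rather than directly from $\operatorname{div}X=\Delta f$, while your primary route via $\int_M|Z|^2=0$ is an equally valid minor variation of the same idea.
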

\begin{theorem}\label{th4}
Let $(M,g)$ be a complete non-compact and non-expanding $m$-quasi Einstein manifold with finite volume. If the potential vector field $X$ is of finite global norm, then there exists an open ball where the scalar curvature $R\geq \lambda n$. 
\end{theorem}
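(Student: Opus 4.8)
The plan is to reduce the statement to the scalar identity obtained by tracing the defining equation, and then to exploit a divergence theorem valid for $L^1$ vector fields on complete manifolds. Tracing (\ref{q1}) with respect to an orthonormal frame, and using that $\operatorname{tr}(\tfrac12\pounds_X g)=\operatorname{div}X$ and $\operatorname{tr}(X^\flat\otimes X^\flat)=|X|^2$, I obtain
\begin{equation}
\operatorname{div}X=\lambda n-R+\frac{1}{m}|X|^2 ,
\end{equation}
which is the non-gradient analogue of (\ref{e1}). The whole argument then turns on controlling the sign and the integrability of $\operatorname{div}X$.

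First I would upgrade the finiteness of the global norm of $X$ to genuine $L^1$-integrability: since $M$ has finite volume, the Cauchy--Schwarz inequality gives
\begin{equation}
\int_M|X|\,dV\le\big(\operatorname{Vol}(M)\big)^{1/2}\Big(\int_M|X|^2\,dV\Big)^{1/2}<\infty ,
\end{equation}
so $|X|\in L^1(M)$. Next I argue by contradiction. Suppose no open ball satisfies $R\ge\lambda n$; by continuity of $R$ this forces $R\le\lambda n$ on all of $M$ (otherwise a point with $R>\lambda n$ would yield such a ball). Then $\lambda n-R\ge0$, and hence from the traced identity $\operatorname{div}X=(\lambda n-R)+\tfrac1m|X|^2\ge0$ everywhere, so $\operatorname{div}X$ does not change sign. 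Invoking the divergence theorem of Caminha (on a complete, oriented, non-compact manifold an $L^1$ vector field whose divergence does not change sign must satisfy $\operatorname{div}X\equiv0$) I conclude $\operatorname{div}X=0$. But then $(\lambda n-R)+\tfrac1m|X|^2=0$ with both summands non-negative forces $R=\lambda n$ (and $X\equiv0$) everywhere, so in fact $R\ge\lambda n$ on every ball, contradicting the hypothesis. Hence an open ball on which $R\ge\lambda n$ must exist.

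The main obstacle is the passage from a pointwise sign condition to a global vanishing statement on a non-compact manifold, where the ordinary divergence theorem is unavailable. The device that overcomes this is twofold: the finite-volume hypothesis is precisely what converts the finite global $L^2$ norm of $X$ into the $L^1$ condition required by Caminha's result, and the proof by contradiction is what manufactures the needed sign-definiteness of $\operatorname{div}X$, which is not available a priori since $R$ is otherwise uncontrolled. If $M$ is not assumed orientable one first lifts to the oriented double cover, which remains complete, non-compact and of finite volume and on which $|X|$ stays integrable, so no generality is lost. Finally I would note that the non-expanding assumption $\lambda\ge0$ is what renders the conclusion informative: it yields $\lambda n\ge0$, so the ball produced carries $R\ge\lambda n\ge0$, which is the foothold used afterwards to force the scalar curvature to vanish under the additional hypothesis of bounded scalar curvature.
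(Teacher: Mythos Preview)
Your proof is correct and follows essentially the same route as the paper: trace (\ref{q1}) to obtain $\operatorname{div}X=\lambda n-R+\tfrac1m|X|^2$, use finite volume together with Cauchy--Schwarz to put $|X|\in L^1(M)$, and then exploit a vanishing-divergence result for $L^1$ vector fields on complete manifolds. The only cosmetic difference is that the paper carries out Yau's cutoff-function argument by hand and concludes directly that $\int_M(\lambda n-R)\,dV\le0$, whereas you package that step as Caminha's lemma and frame the final deduction as a proof by contradiction.
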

\begin{cor}\label{co1}
Let $(M,g)$ be a complete non-compact and non-expanding $m$-quasi Einstein manifold with finite volume. If the potential vector field $X$ is of finite global norm and the scalar curvature is bounded by  $\lambda n$, then the scalar curvature vanishes in $M$. 
\end{cor}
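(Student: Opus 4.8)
The plan is to reduce the whole statement to a single scalar identity obtained by tracing the defining equation (\ref{q1}). Taking the trace of
\begin{equation*}
Ric+\frac{1}{2}\pounds_Xg-\frac{1}{m}X^\flat\otimes X^\flat=\lambda g
\end{equation*}
over an orthonormal frame gives $R+\operatorname{div}X-\frac{1}{m}|X|^2=\lambda n$, that is,
\begin{equation*}
\operatorname{div}X=\lambda n-R+\frac{1}{m}|X|^2,
\end{equation*}
the vector-field analogue of (\ref{e1}). Under the standing hypothesis that the scalar curvature is bounded above by $\lambda n$, the term $\lambda n-R$ is nonnegative, so this identity forces $\operatorname{div}X\geq\frac{1}{m}|X|^2\geq 0$ everywhere on $M$.

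Next I would integrate this against a cut-off exhaustion in the spirit of the divergence argument underlying Theorem \ref{th4}. Choose $\phi_r\in C_c^\infty(M)$ with $\phi_r\equiv 1$ on $B(p,r)$, $\operatorname{supp}\phi_r\subset B(p,2r)$ and $|\nabla\phi_r|\leq C/r$. Integration by parts gives
\begin{equation*}
\int_M \phi_r\,\operatorname{div}X\,dV=-\int_M\langle\nabla\phi_r,X\rangle\,dV,
\end{equation*}
and the right-hand side is bounded in absolute value by $\frac{C}{r}\int_{B(p,2r)\setminus B(p,r)}|X|\,dV\leq \frac{C}{r}\int_M|X|\,dV$. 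Since $X$ has finite global norm (so $|X|\in L^1(M)$, the finite volume allowing one to pass from $L^2$ if necessary), this tends to $0$ as $r\to\infty$. On the left $\operatorname{div}X\geq 0$, so monotone convergence yields $\int_M\operatorname{div}X\,dV=\lim_{r\to\infty}\int_M\phi_r\,\operatorname{div}X\,dV=0$. A nonnegative continuous function with vanishing integral must vanish identically, whence $\operatorname{div}X\equiv 0$; the identity above then forces $\frac{1}{m}|X|^2\equiv 0$, i.e.\ $X\equiv 0$, together with $R=\lambda n$ on all of $M$.

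Finally I would pin down the sign of $\lambda$. With $X\equiv 0$, equation (\ref{q1}) collapses to $Ric=\lambda g$, so $(M,g)$ is Einstein. If $\lambda>0$, then $Ric=\lambda g$ is bounded below by a positive multiple of $g$, and the Bonnet--Myers theorem would force $M$ to be compact, contradicting non-compactness; hence $\lambda\leq 0$. Since $M$ is non-expanding we also have $\lambda\geq 0$, so $\lambda=0$ and therefore $R=\lambda n=0$, as claimed. This is consistent with Theorem \ref{th4}, which already produces a ball on which $R\geq\lambda n$ and hence $R=\lambda n$ there. I expect the main obstacle to be the rigorous justification of the divergence computation on the complete non-compact manifold: the boundary term at infinity must be controlled, and this is precisely where the finite global norm of $X$ together with the sign condition $\operatorname{div}X\geq 0$ (through monotone convergence) are used; once $\operatorname{div}X\equiv 0$ is established, the remaining steps are essentially forced.
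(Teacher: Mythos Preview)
Your argument is correct and tracks the paper's closely in its first half: the paper, via Theorem~\ref{th4}, establishes $\int_M\bigl(\lambda n-R+\tfrac{1}{m}|X|^2\bigr)\,dV=0$ by exactly the cut-off/divergence computation you outline, and then combines the resulting inequality~(\ref{e5}) with the hypothesis $R\leq\lambda n$ to force $R\equiv\lambda n$. Where you diverge is in the endgame. The paper, having $R$ constant, invokes Theorem~\ref{th2} (the Hu--Li--Xu result that constant scalar curvature on an $m$-quasi Einstein manifold forces $\lambda\leq 0$) and pairs it with non-expansion to get $\lambda=0$. You instead push the pointwise information further: from $\operatorname{div}X\equiv 0$ you extract $X\equiv 0$, so the manifold is genuinely Einstein, and then Bonnet--Myers rules out $\lambda>0$ by non-compactness. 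Your route is more elementary and self-contained (it avoids the external citation) and yields the extra conclusion that the structure is trivial; the paper's route is shorter because it simply quotes Theorem~\ref{th2} once $R$ is seen to be constant.
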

\section{Proof of the results}
To prove Theorem \ref{th1} we need the following result of Hu et al. \cite{HLX15}, which we state at first.
\begin{theorem}\cite{HLX15}\label{th2}
In an $m$-quasi Einstein manifold $(M,g)$ with constant scalar curvature, $\lambda\leq 0$.
\end{theorem}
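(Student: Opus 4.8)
The plan is to argue by contradiction and reduce everything to two facts: the defining equation (\ref{e0}) says exactly that the $m$-Bakry--Emery tensor equals $\lambda g$, and a positive value of $\lambda$ should be incompatible with constant scalar curvature on a complete manifold. So suppose $\lambda>0$. Since $0<m<\infty$, equation (\ref{e0}) rewrites as $Ric+\nabla^2 f-\frac1m\,df\otimes df=\lambda g>0$, i.e. the $m$-Bakry--Emery Ricci tensor admits a strictly positive lower bound.

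First I would promote this positive lower bound to compactness of $(M,g)$. For a \emph{finite} weight $m$ the $m$-Bakry--Emery tensor obeys a Bonnet--Myers mechanism: a uniform positive lower bound forces a finite intrinsic diameter, hence compactness of the complete manifold. I would obtain this from the weighted (Bakry--Emery) comparison theory, running the second-variation/index-form argument in which the manifold of dimension $n$ equipped with the weight behaves like a space of synthetic dimension $n+m$; the key is that the term $-\frac1m\,df\otimes df$ in (\ref{e0}) is exactly what renders the comparison geometry finite-dimensional, so $\lambda>0$ yields a genuine diameter bound. Thus the assumption $\lambda>0$ already forces $M$ to be compact.

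Now the constant-scalar-curvature hypothesis enters decisively. On the compact manifold produced in the previous step, I would invoke the triviality theorem of Case et al. \cite{CSW11}: a compact $m$-quasi Einstein manifold with constant scalar curvature is trivial, so that the potential $f$ (equivalently the vector field $X$ of (\ref{q1})) is constant and $(M,g)$ is Einstein. This contradicts the standing requirement that the $m$-quasi Einstein structure be genuine, i.e. that $X$ be a nonvanishing field. Hence $\lambda>0$ cannot occur, and we conclude $\lambda\le 0$.

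The main obstacle is the compactness step. One must check that the finite-$m$ curvature bound really drives Myers' theorem: the naive second-variation computation produces an extra term of the form $\frac1m(df)^2$ whose sign has to be controlled, and it is precisely the finiteness of $m$ (as opposed to the Ricci-soliton case $m=\infty$, where shrinking examples with constant scalar curvature and $\lambda>0$ do exist) that keeps this term harmless. A secondary but necessary point is to phrase the conclusion so that the Case--Shu--Wei triviality yields a true contradiction rather than merely the degenerate Einstein situation; this is handled by reading (\ref{q1}) as describing a nonvanishing potential field, consistently with the hypotheses imposed throughout the paper.
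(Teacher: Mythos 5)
The first thing to say is that the paper contains no proof of this statement to compare yours against: Theorem \ref{th2} is imported verbatim from \cite{HLX15} and used as a black box, so your argument is necessarily an independent reconstruction. Judged on its own terms, it is essentially sound, for the statement the authors evidently intend, namely a \emph{complete, non-trivial} $m$-quasi Einstein manifold with $m<\infty$. Your compactness step is a genuine theorem (Qian's Myers-type theorem for the Bakry--\'Emery tensor with finite weight): if $Ric+\nabla^2f-\frac{1}{m}\,df\otimes df\geq \lambda g$ with $\lambda>0$ on a complete manifold, then $\mathrm{diam}(M)\leq \pi\sqrt{(n+m-1)/\lambda}$, and your description of the mechanism is the right one --- in the second-variation integrand the cross term $2\varphi\varphi'\,(f\circ\gamma)'$ is absorbed by the quadratic term $\frac{1}{m}\varphi^2((f\circ\gamma)')^2$ upon completing a square, at the cost of an extra $m(\varphi')^2$, which is exactly why the manifold behaves as if it had dimension $n+m$. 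The same computation goes through verbatim for the non-gradient field $X$ of (\ref{q1}) with $u=\langle X,\gamma'\rangle$ in place of $(f\circ\gamma)'$; in that case one would quote the compact triviality results of \cite{BG16} rather than \cite{CSW11}. Combining compactness with the Case--Shu--Wei triviality theorem then rules out $\lambda>0$ for non-trivial structures, as you conclude.

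You were right to flag your two added hypotheses, because both are indispensable and neither appears in the statement as printed. Finiteness of $m$ is essential: the Gaussian shrinker ($\mathbb{R}^n$ with $f=\frac{\lambda}{2}|x|^2$) has constant scalar curvature $R=0$ and $\lambda>0$, so the claim fails at $m=\infty$ even though the paper's definition allows $0<m\leq\infty$. Non-triviality is equally essential: the round sphere with $f$ constant is an $m$-quasi Einstein manifold with constant scalar curvature and $\lambda=n-1>0$, so the theorem is false as literally stated and your reading is the only tenable one. You also tacitly use completeness (Myers requires it); make that explicit. Finally, be aware of a tension your corrected statement creates with the paper itself: in the proofs of Theorem \ref{th1} and Corollary \ref{co1} the authors invoke Theorem \ref{th2} precisely \emph{after} the potential has been shown to be constant, i.e.\ in the trivial case your hypothesis excludes, so your version would not support those applications. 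That is a defect in how the paper cites the result, not in your argument, but it is worth recording.
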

\begin{proof}[\textbf{Proof of Theorem \ref{th1}}]
Since $f\in C^\infty(M)$, ring of smooth functions on $M$, is a non-constant convex function on $M$, it follows that \cite{YA74}, $M$ is non-compact. Let us consider the cut-off function, studied in \cite{CC96}, $\varphi_r\in C^2_0(B(p,2r))$ for $r>0$, where $C^2_0(B(p,2r))$ is a class of second order continuously differentiable functions with compact support and $C$ being a constant, $(B(p,2r))\subseteq M$, $p\in M$ such that
\[ \begin{cases} 
		\varphi_r=1  & \text{ in }B(p,r) \\
	  0\leq \varphi_r\leq 1 &\text{ in }B(p,2r)\\      
      \Delta \varphi_r\leq \frac{C}{r^2} &  \text{ in }B(p,2r).
   \end{cases}
\]
Then for $r\rightarrow\infty$, we have $\Delta \varphi^2_r\rightarrow 0$ as $\Delta \varphi^2_r\leq \frac{C}{r^2}$.
The convexity of $f$ implies that $f$ is also subharmonic \cite{GH71}, i.e., $\Delta f\geq 0$. In view of integration by parts, we obtain
\begin{equation}\label{r5}
\int_M \Delta f\varphi^2_r=\int_M f\Delta \varphi^2_r.
\end{equation}
 Since $\varphi_r\equiv 1$ in $B(p,r)$, using (\ref{r5}), we get
 \begin{equation*}
 \int_{B(p,r)}\Delta f=0.
 \end{equation*}
 Again, in view of integration by parts and also by our assumption, we obtain
 \begin{equation*}
0\leq  \int_{B(p,2r)}\varphi_r^2\Delta f=\int_{B(p,2r)-B(p,r)}f\Delta \varphi_r^2\leq \int_{B(p,2r)-B(p,r)}f\frac{C}{r^2}.
 \end{equation*}
 But the right hand side tends to zero as $r\rightarrow \infty$. Hence we get
 $$\int_M\Delta f=0.$$ 
 Thus, the subharmonocity of $f$ implies that $\Delta f=0$ in $M$. Therefore, (\ref{e1}) entails that
 \begin{equation}
 R-\lambda n=\frac{1}{m}|\nabla f|^2,
 \end{equation}
 which implies that $R\geq \lambda n$. Taking $u=\log f$ and then simplifying we obtain  
 $$-\Delta u=|\nabla u|^2=\frac{|\nabla f|^2}{f^2}.$$
 Let $l(x)$ be the distance of $x\in M$ from the fixed point $p$. For any $r>0$, consider the function $\eta:[0,\infty)\rightarrow[0,1]$ satisfying the following properties:
 \[\begin{cases} 
 \eta(t)=1 \text{ for }t\leq r\\
 \eta(t)=0 \text{ for }t\geq 2r\\
 	 \eta'\leq 0\\
      (\eta')^2\leq \frac{(\eta')^2}{\eta}\leq \frac{C}{r^2}\\
      |\eta''|\leq\frac{C}{r^2},
    \end{cases}
 \] 
 for some constant $C<\infty$.
 Now define the function $\eta$ on $M$ by $\eta(x)=\eta(l(x))$ for $x\in M$. Then the function $u$ satisfies the following inequality:
 $$|\nabla\nabla u|^2\geq \frac{1}{n}(\Delta u) ^2=\frac{1}{n}|\nabla u|^4.$$
 By virtue of above inequality and the Bochner formula, we obtain
 \begin{eqnarray*}
 \frac{1}{2}\Delta ( \eta|\nabla u|^2)&=& \frac{1}{2}\Delta \eta |\nabla u|^2+\nabla \eta\nabla|\nabla u|^2+ \frac{1}{2}\eta\Delta|\nabla u|^2\\
 &=& \frac{1}{2}\Delta \eta |\nabla u|^2+\nabla \eta\nabla|\nabla u|^2+\eta|\nabla\nabla u|^2+\eta\nabla u\nabla(\Delta u)+\eta Ric(\nabla u,\nabla u)\\
 &=& \frac{1}{2}\Delta \eta |\nabla u|^2+\eta\nabla \eta\nabla|\nabla u|^2+\eta|\nabla\nabla u|^2-\eta\nabla u\nabla|\nabla u|^2+\eta Ric(\nabla u,\nabla u) \\
 &\geq & \Big(\frac{1}{2\eta}\Delta \eta-\frac{1}{\eta^2}|\nabla\eta|^2+\frac{1}{\eta}\nabla\eta\nabla u \Big)\eta|\nabla u|^2+\Big(\frac{1}{\eta}\nabla\eta-\nabla u \Big)\nabla(\eta|\nabla u|^2)\\
 &&+\frac{1}{n}\eta|\nabla u|^4+\eta Ric(\nabla u,\nabla u).
 \end{eqnarray*}
 Again, calculation shows that $\eta$ satisfies the following inequality:
 \begin{equation}\label{e2}
 |\nabla \eta\nabla u|\leq \frac{\eta|\nabla u|^2}{2n}+\frac{n}{2\eta}|\nabla \eta|^2.
 \end{equation}
 Since $|\nabla u|^2$ is non-zero, there is a point at which $\eta|\nabla u|^2$ is maximum where $\eta$ is smooth, and hence $\Delta(\eta|\nabla u|^2)\leq 0$ and $\nabla(\eta|\nabla u|^2)=0$ such that
 $$0\geq \frac{1}{2}\Delta \eta-\frac{1}{\eta}|\nabla\eta|^2+\nabla\eta\nabla u+\frac{1}{n}\eta|\nabla u|^2+\eta^2 Ric(\nabla u,\nabla u).$$
 By using (\ref{e2}), we have
 \begin{eqnarray}
 -\Delta\eta+\frac{n}{\eta}|\nabla \eta|^2+\frac{2}{\eta}|\nabla \eta|^2&=& -\Delta\eta+\frac{n}{\eta}|\nabla \eta|^2+\frac{2}{\eta}|\nabla \eta|^2+2\nabla\eta\nabla u-2\nabla\eta\nabla u\\
 &\geq & \frac{n}{\eta}|\nabla \eta|^2+2\nabla\eta\nabla u+\frac{2}{n}\eta|\nabla u|^2+2\eta^2 Ric(\nabla u,\nabla u)\\
 &\geq & \frac{1}{n}\eta|\nabla u|^2+2\eta^2 Ric(\nabla u,\nabla u).
 \end{eqnarray}
 Therefore, we get
$$-\eta'\Delta d-\eta''+\frac{n+2}{\eta}(\eta')^2\geq \frac{1}{n}\eta|\nabla u|^2+2\eta^2 Ric(\nabla u,\nabla u).$$ 
Since $Ric\geq -(n-1)K$, the Laplace comparison theorem implies that
$$\Delta d\leq\frac{n-1}{d}(1+\sqrt{K}d)\leq \frac{n-1}{r}+(n-1)\sqrt{K},$$
where $\eta'\neq 0$. Hence, we obtain
$$\frac{1}{n}\eta|\nabla u|^2+2\eta^2 Ric(\nabla u,\nabla u)\leq \frac{C'(n)}{r^2}+\frac{C''(n)}{r}\sqrt{K},$$
where $C'(n)$ and $C''(n)$ are positive constants depend only on $n$. Consequently, we get
$$\frac{1}{n}\eta|\nabla u|^2\leq  \frac{C'(n)}{r^2}+\frac{C''(n)}{r}\sqrt{K}-2\eta^2 Ric(\nabla u,\nabla u).$$
Taking limit as $r\rightarrow\infty$ in both sides, yields for all $x\in M$,
\begin{eqnarray}\label{e3}
\nonumber|\nabla u|^2_x&\leq & -2nRic_x(\nabla u,\nabla u)\\
&=& \frac{-2n}{f^2}Ric_x(\nabla f,\nabla f).
\end{eqnarray}
 But Ricci curvature vanishes along $\nabla f$, i.e., $Ric(\nabla f,\nabla f)= 0$, hence, we have $|\nabla u|=0$, which shows that $f$ is constant. Therefore, (\ref{e0}) implies that the manifold becomes Einstein and $R$ is equal to $\lambda n$ which is a constant. Again, Theorem \ref{th2} implies that in an $m$-quasi Einstein manifold $\lambda\leq 0$. Hence, $R=\lambda n\leq 0$ and it also implies that Ricci curvature is non-positive.
\end{proof}
\begin{proof}[\textbf{Proof of Theorem \ref{th3}}]
Let $(M,g)$ is a Riemannian manifold which is compact and oriented. If $X$ is a vector field on $M$, the by virtue of Hodge-de Rham decomposition theorem, (see e.g. \cite{WF1983}), $X$ can be written as
\begin{equation}\label{hd1}
X=W+\nabla \xi,
\end{equation}
where $div\ W=0$ and $\xi$ is a smooth function called the Hodge-de Rham potential. We consider an $m$-quasi Einstein manifold $(M,g)$ such that equation (\ref{q1}) yields
\begin{equation}\label{qe1}
R+div X-\frac{1}{m}|X|^2=\lambda n.
\end{equation}
Hence ($\ref{hd1}$) entails that $divX=\Delta \xi$ and consequently $(\ref{qe1})$ takes the form
\begin{equation}\label{qe2}
R+\Delta \xi-\frac{1}{m}|X|^2=\lambda n.
\end{equation}
Again from $(\ref{e0})$ we have
 \begin{equation*}\label{qe3}
 R+\Delta f-\frac{1}{m}|\nabla f|^2=\lambda n.
 \end{equation*}
 From $(\ref{qe2})$, it follows that 
 \begin{equation}
 \Delta(f-\xi)=0,
 \end{equation}
 which implies that $f=\xi+C_1$, for some constant $C_1$, this proves the result.
\end{proof}
Let $(M,g)$ be an oriented Riemannian manifold and $\Lambda^k(M)$ be the set of all $k$-th differential forms in $M$. Now for any integer $k\geq 0$, the global inner product in $\Lambda^k(M)$ is defined by
$$\langle \eta,\omega\rangle=\int_M \eta\wedge *\omega,$$
for $\eta,\omega\in\Lambda^k(M)$. Here $`*'$ is the the Hodge star operator. Hence, we define the global norm of $\eta\in\Lambda^k(M)$ by $\|\eta\|^2=\langle \eta,\eta\rangle$ and remark that $\|\eta\|^2\leq \infty$.
\begin{proof}[\textbf{Proof of Theorem \ref{th4}}]
For any $r>0$ we have
\begin{eqnarray}
\nonumber\frac{1}{r}\int_{B(p,2r)}|X|dV & \leq & \Big(\int_{B(p,2r)}\langle X,X\rangle dV \Big)^{1/2} \Big(\int_{B(p,2r)}\Big(\frac{1}{r} \Big)^2 dV\Big)^{1/2}\\
\nonumber&\leq& \|X\|_{B(p,2r)}\frac{1}{r}\Big(Vol(M)\Big)^{1/2},
\end{eqnarray}
where $Vol(M)$ denotes the volume of $M$. Thus we obtain
\begin{equation*}
\liminf_{\ r\rightarrow \infty} \frac{1}{r}\int_{B(p,2r)}|X|dV=0.
\end{equation*}
Again, there exists a Lipschitz continuous function $\omega_r$ such that for some constant $K>0$, (see \cite{YA76}),
\begin{eqnarray*}
&&|d\omega_r|\leq \frac{K}{r}\qquad \text{almost everywhere on }M\\
&&0\leq \omega_r(x)\leq 1\quad\forall x\in M\\
&&\omega_r(x)=1\quad\forall x\in B(p,r)\\
&& \text{ supp }\omega_r\subset B(p,2r).
\end{eqnarray*}
Then taking limit, we get $\lim\limits_{r\rightarrow\infty}\omega_r=1$.
Therefore, by using the function $\omega_r$, we have
\begin{equation*}
\Big|\int_{B(p,2r)}\omega_r div X dV \Big|\leq \frac{C}{r}\int_{B(p,2r)}|X|dV.
\end{equation*}
In view of the $m$-quasi Einstein manifold equation $(\ref{q1})$, we get
\begin{equation*}
\int_M \{\lambda n-R+\frac{1}{m} |X|^2\}dV=0,
\end{equation*}
which yields 
\begin{equation}\label{e5}
\int_M(\lambda n-R)dV\leq 0.
\end{equation}
 Since $R$ is continuous, there is an open ball where $R\geq \lambda n$.
\end{proof}
\begin{proof}[\textbf{Proof of Corollary \ref{co1}}]
Our assumption and (\ref{e5}) together imply that $R=\lambda n$ in $M$. Again, using Theorem \ref{th2}, we get $\lambda\leq 0$. But the manifold is non-expanding $m$-quasi Einstein. Therefore, $R=\lambda n=0$ in $M$.
\end{proof}
\section*{Acknowledgment}
 The authors extend their appreciation to the deanship of scientific research at King Khalid University for funding this work through research groups program under grant number R.G.P.1/50/42.

\end{document}